\numberwithin{equation}{section}
\newtheorem{theorem}{Theorem} 
\newtheorem{proposition}[theorem]{Proposition}
\newtheorem{lemma}{Lemma}
\theoremstyle{remark}
\def\be{\begin{equation}}
\def\ee{\end{equation}}
\def\ve{\varepsilon}
\def\Prob{\rm Prob\,}
\def\Spec{\,Spec\,}
\def\dist{\rm dist}
\begin{document}

\title[]
{On the local eigenvalue spacings for certain Anderson-Bernoulli Hamiltonians}
\author
{J.~Bourgain}
\address
{Institute for Advanced Study, Princeton, NJ 08540}
\email
{bourgain@math.ias.edu}
\thanks{This work was partially supported by NSF grant DMS-1301619}
\date{\today}

\abstract
The aim of this work is to extend the results from [B2] 
on local eigenvalue spacings to certain 1D lattice Schrodinger
with a Bernoulli potential.  
We assume the disorder satisfies a certain algebraic condition that enables one to invoke the recent
results from [B1] on the regularity of the density of states.
In particular we establish Poisson local eigenvalue statistics in those models.
\endabstract

\maketitle

\section{\bf Introduction}

The aim of this Note is to exploit the results from \cite{B1} on certain
Anderson-Bernoulli (A-B) Hamiltonians, in order to extend some of the eigenvalue
spacing properties obtained in \cite{B2} for Hamiltonians with H\"older
site-distribution to the A-B setting.

As in \cite{B2}, all models are 1D.
Recall that the A-B Hamiltonian with coupling $\lambda$ is given by
\be\label {1.1}
H=H_\lambda =\Delta +\lambda V
\ee
where $V=(v_1)_{n\in\mathbb Z} $ are IID-variables ranging in $\{-1, 1\}$,
$\Prob [v_n= -1] =\frac 12=\Prob [v_n=1]$.
It is believed that for $\lambda\not=0$ sufficiently small, the integrated density
of states (IDS) $\mathcal N$ of $H$ is Lipschitz and becomes arbitrary smooth for
$\lambda\to 0$.
A first result in this direction was obtained in \cite {B1}, for small $\lambda$
with certain specific algebraic properties.

\begin{proposition}\label{Proposition1}
(see \cite{B1}).

Let $H_\lambda$ be the A-B model considered above and restrict $|E|< 2-\delta_0$ for some fixed $\delta_0>0$.
Given a constant $C>0$ and $k\in \mathbb Z_+$, there is some $\lambda_0=\lambda_0(C, k)> 0$ such that $\mathcal N(E)$
is $C^k$-smooth on $[-2+\delta_0, 2-\delta_0]$ provided $\lambda$ satisfies the following conditions.

\begin{itemize}
\item[(1.2)] \ $|\lambda|<\lambda_0$
\item[(1.3)] \ $\lambda$ is an algebraic number of degree $d<C$ and minimal polynomial $P_d(x)\in\mathbb Z[X]$ with 
coefficients bounded by $(\frac 1\lambda)^C$
\item[(1.4)] \ $\lambda$ has a conjugate $\lambda'$ of modulus $|\lambda'|\geq 1$
\end{itemize}
\end{proposition}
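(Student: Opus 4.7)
The plan is to reduce the $C^k$-smoothness claim to a quantitative Fourier-decay estimate
\[
|\hat\rho(t)|\le C_k (1+|t|)^{-k-2}
\]
on (a smooth localization of) the density of states $\rho=d\mathcal N/dE$ on the interval $|E|\le 2-\delta_0$. Via the standard trace formula
\[
\hat\rho(t)=\lim_{N\to\infty}\frac1N\,\mathbb E\bigl[\Tr\bigl(e^{itH_{\lambda,N}}\bigr)\bigr],
\]
this becomes a question about expected oscillatory traces for the Bernoulli ensemble on a finite box.

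First I would expand $e^{itH_{\lambda,N}}$ around the free Laplacian, either by iterated Duhamel or by a Chebyshev/transfer-matrix decomposition, producing a series whose general term has the form $\lambda^{|S|}\bigl(\mathbb E\prod_{n\in S}v_n\bigr)e^{it\Phi(S)}$ over multisets $S$ of sites, with phases $\Phi(S)$ coming from the free propagator. Bernoulli parity annihilates all odd-cardinality contributions, and the very-low-order even terms can be handled perturbatively in $\lambda$. The real work is extracting genuine oscillatory cancellation from the high-order terms at phase scales much finer than $|\lambda|$, which for a H\"older-continuous site distribution would come automatically from averaging the density, but for a Bernoulli distribution must be recovered by arithmetic input.

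This is exactly where hypotheses (1.3)--(1.4) enter. Typical small-denominator obstructions in this setting have the shape
\[
\bigl|\lambda S_N-q\bigr|\ll\text{(target)},\qquad S_N=\sum_n\epsilon_n a_n\in\mathbb Z,\ q\in\mathbb Z,\ \epsilon_n\in\{\pm1\},
\]
and cannot be excluded for generic small $\lambda$. Condition (1.3) supplies a Liouville-type lower bound on $|\lambda S_N-q|$ in terms of $d$, the height of $\lambda$, and $|S_N|+|q|$; condition (1.4) prevents $\lambda$ from being Pisot-like and so rules out the pathological equidistribution failure typical of small algebraic numbers all of whose conjugates are also small. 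Together these yield an effective anti-concentration estimate for $\lambda S_N$ which plays the role normally played by continuity of the site distribution and upgrades a H\"older Wegner estimate to a $C^k$ one.

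The hard part will be implementing this Galois/Diophantine transfer \emph{coherently across all the relevant scales} of a multiscale Furstenberg-type transfer-matrix analysis: at each scale the arithmetic gain must outrun the accumulated losses from the previous steps. This is precisely what forces the threshold $\lambda_0=\lambda_0(C,k)$, since a prescribed smoothness $k$ fixes the depth of the multiscale iteration and hence the margin one must extract from the gap $|\lambda'|/|\lambda|\ge 1/|\lambda|$ furnished by (1.4), while (1.3) controls the constants in the Liouville bound uniformly in the relevant range.
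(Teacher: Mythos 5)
The statement you are proving is imported from \cite{B1}, so the comparison is against that paper's argument, which the present paper does not reproduce. Your plan is not the route \cite{B1} takes, and more importantly it has a genuine gap precisely at the step you flag as ``the hard part.''

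The actual proof in \cite{B1} does not work at the level of $e^{itH}$ or a trace formula at all. It works with the Furstenberg measure $\nu_E$: one shows that $\nu_E$ is absolutely continuous with a $C^k$ density, and then transfers this to $\mathcal N(E)$ via the standard integral representation of the IDS (Thouless-type formula) together with the $C^k$ dependence of $\nu_E$ on $E$. The regularity of $\nu_E$ is obtained by establishing polynomial Fourier decay of $\nu_E$, and the engine for that is a Bourgain--Gamburd ``flattening'' argument for convolution powers of $\mu=\tfrac12(\delta_{g_+}+\delta_{g_-})$ on $SL_2(\mathbb R)$: one shows that $\mu^{(\ell)}$ cannot stay concentrated at any scale, using a discretized sum-product / entropy increment, and this upgrades to Fourier decay. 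The Diophantine input is what makes the flattening run for a Bernoulli measure: matrix entries of words of length $\ell$ in $g_\pm$ are polynomials in $\lambda$ of degree $\le\ell$ with integer coefficients of controlled height, and (1.3) plus the Mahler-measure bound coming from (1.4) (the conjugate of modulus $\ge1$ keeps the norm $\prod_\sigma |P(\sigma\lambda)|\ge 1$ from being destroyed by small conjugate factors) give a lower bound on $|P(\lambda)|$ that serves as the non-concentration hypothesis of the flattening lemma. So you have correctly identified \emph{why} (1.3)--(1.4) are there — they stand in for the anti-concentration one gets for free from a H\"older site distribution — but not the mechanism that uses them.

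The gap in your plan is structural. Expanding $e^{itH_\lambda}$ around $e^{it\Delta}$ by Duhamel gives a series whose $m$-th term is $O((|t\lambda|)^m/m!)$; this controls $\hat\rho(t)$ only for $|t|\lesssim 1/|\lambda|$, whereas $C^k$-smoothness needs decay at \emph{all} frequencies $t$. At the scales that matter, the series has no useful truncation, so the statement ``extract oscillatory cancellation from the high-order terms'' is not a perturbative statement at all — it is the whole theorem, repackaged. When you then say the hard part is ``implementing this Galois/Diophantine transfer coherently across all scales of a multiscale Furstenberg-type transfer-matrix analysis,'' you have abandoned the Duhamel framework and gestured at the Furstenberg/flattening argument without giving it. In particular you never explain how a single Liouville bound $|\lambda S_N - q|\gtrsim (\text{height})^{-C}$ propagates through a renormalization: the flattening lemma needs a non-concentration estimate for $\mu^{(\ell)}$ at every dyadic scale simultaneously, and converting Liouville bounds on polynomial values $P(\lambda)$ into such a multiscale non-concentration statement is exactly the content of \cite{B1}'s argument (the ``group expansion'' of the title). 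As written, your proposal reduces the proposition to an unproved estimate that is at least as hard as the proposition itself.

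Two smaller points. First, the trace formula you write, $\hat\rho(t)=\lim_N N^{-1}\mathbb E\,\Tr e^{itH_{\lambda,N}}$, is fine, but you need a smooth localization in $E$ to $[-2+\delta_0,2-\delta_0]$ before Fourier decay is even equivalent to local $C^k$-smoothness, and that localization interacts badly with the Duhamel expansion (it does not commute with expanding in $\lambda$). Second, your reading of (1.4) as ``anti-Pisot'' is not quite the point: since $|\lambda|$ is small, $\lambda$ is never Pisot; what (1.4) buys is the lower bound on $|P(\lambda)|$ via the resultant/Mahler-measure argument described above, which fails if all conjugates of $\lambda$ are small.
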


In what follows, we assume $\lambda$ satisfies the conditions of Proposition \ref{Proposition1} and the energy $E$ restricted
to $[-2+\delta_0, 2-\delta_0]$, unless specified differently.

Once we are in the presence of the Hamiltonian with a bounded density of states $k(E)=\frac{ d\mathcal N}{dE}$, it becomes a
natural question to inquire about local eigenvalue statistics for `truncated' models $H_N$, denoting $H_N$ the restriction of
$H$ to the interval $[1, N]$ with Dirichlet boundary conditions.
This problem was explored in \cite{B2}, assuming the site distribution $v_n$ of $V$ H\"older regular of some exponent
$\beta>0$, and we extended (in 1D) the theorem from \cite{G-K} on Poisson statistics in this setting.
Here we consider the A-B situation.

\begin{proposition}\label{Proposition2}
With $H$ as in Proposition 1, the rescaled eigenvalues of $H_N$
$$
\{N(E-E_0)\mathcal X_I(E)\}_{E\in\Spec H_N}
$$
where $I=[E_0, E_0+\frac LN]$ and we let first $N\to\infty$, then $L\to\infty$, obey Poisson statistics.
\end{proposition}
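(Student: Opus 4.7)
The plan is to adapt the method of \cite{B2}, which established Poisson statistics assuming a H\"older site distribution, to the singular Bernoulli setting, using the smoothness of the IDS $\mathcal N$ from Proposition \ref{Proposition1} as the substitute for the regularity that was previously furnished by the single-site distribution. The skeleton remains the classical Minami--Germinet--Klein scheme: combine Anderson localization, a Wegner-type one-point bound, and a Minami-type two-point bound; then decouple via the independence of the Bernoulli variables in disjoint blocks and apply a Poisson limit theorem for triangular arrays.

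\textbf{Step 1: localization and block decomposition.} For the 1D A-B model at energies $|E|<2-\delta_0$, Furstenberg's theorem gives a positive Lyapunov exponent $\gamma(E,\lambda)>0$, hence Anderson localization with exponentially decaying eigenfunctions. I would partition $[1,N]$ into disjoint sub-intervals $\Lambda_j$ of length $M=M(N)$, chosen as a moderately large power of $\log N$, so that: (a) with overwhelming probability every eigenfunction of $H_N$ in $[-2+\delta_0,2-\delta_0]$ is essentially supported in a single $\Lambda_j$, and (b) the restricted Hamiltonians $H_{\Lambda_j}$ are mutually independent through the disjointness of their potential variables. A Combes--Thomas / resolvent identity estimate then lets one compare the spectrum of $H_N$ in a microscopic window $I=[E_0,E_0+L/N]$ with the union of the spectra of the $H_{\Lambda_j}$, up to an error that vanishes as $N\to\infty$.

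\textbf{Step 2: Wegner and Minami substitutes.} The Wegner input amounts to
\[
\mathbb E\bigl[\#(\Spec H_{\Lambda_j}\cap J)\bigr]\le k(E_0)|J|\,|\Lambda_j|+o(|J|\,|\Lambda_j|),
\]
which follows directly from the $C^k$-smoothness of $\mathcal N$ in Proposition \ref{Proposition1}. The more delicate ingredient is the Minami-type bound
\[
\Prob\bigl[\#(\Spec H_{\Lambda_j}\cap J)\ge 2\bigr]\lesssim \bigl(|J|\,|\Lambda_j|\bigr)^2,
\]
whose standard spectral-averaging proof requires a continuous distribution. This is the main obstacle. I would attempt to recover it by applying the arguments of \cite{B1} to a product of two resolvents: after convolving the Bernoulli variables over a sub-block of size $\sim\lambda^{-C}$, the algebraic non-degeneracy conditions (1.3)--(1.4) render the resulting distribution sufficiently regular for a two-eigenvalue analogue of the IDS regularity theorem to hold. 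Alternatively, the 1D Klein--Molchanov mechanism, exploiting the generic simplicity of eigenvalues of Jacobi matrices with varying potential, may be combined with Proposition \ref{Proposition1} to yield the required bound.

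\textbf{Step 3: Poisson convergence.} Once Steps 1--2 are in place, the rescaled point process $\xi_N=\sum_{E\in\Spec H_N}\delta_{N(E-E_0)}$, restricted to $[0,L]$, is up to negligible error the superposition of $\sim N/M$ independent block processes, each contributing at most one point and each having intensity $k(E_0)M/N$ per unit rescaled length. Grigelionis' triangular-array Poisson limit theorem then gives convergence in distribution, as $N\to\infty$ followed by $L\to\infty$, to a Poisson point process on $\mathbb R$ of intensity $k(E_0)$, which is the conclusion of Proposition \ref{Proposition2}.
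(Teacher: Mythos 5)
Your proposal reproduces the correct overall skeleton — localization and block decomposition, a Wegner bound, a Minami-type two-point bound, decoupling via independence, and a triangular-array Poisson limit — and this is indeed the scheme the paper follows (Sections 4--5, modelled on Proposition 5 of \cite{B2}). Steps 1 and 3 are fine, and your Wegner input in Step 2 matches Propositions \ref{Proposition3} and \ref{Proposition4}.

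The genuine gap is in Step 2 at the Minami-type bound, which you yourself flag as ``the main obstacle'' but then only sketch two speculative routes, neither of which is what the paper does and neither of which obviously closes the gap. You propose (i) running the arguments of \cite{B1} on a product of two resolvents after convolving the Bernoulli variables on a sub-block, or (ii) invoking a Klein--Molchanov simplicity mechanism. The paper instead proves the two-point bound by a purely deterministic linear-algebra splitting argument (Lemma \ref{Lemma3}): if $\xi\perp\xi'$ are two orthonormal approximate eigenvectors of $H_N$ at $E_0$ with error $\delta$, both exponentially small within $\sqrt N$ of the boundary, then one considers the projective angles $\theta_j$ of the pairs $(\xi_j,\xi_j')$, finds interior indices $\nu,\nu_1$ where these angles are separated by $\gtrsim 1/N$, and from a short case analysis produces an interior index $j_1$ and a test vector $\eta=(\xi_{j_1}\xi'-\xi_{j_1}'\xi)/\bigl(\xi_{j_1}^2+(\xi_{j_1}')^2\bigr)^{1/2}$ that vanishes at $j_1$, is $\gtrsim N^{-2}$ at both $\nu$ and $\nu_1$, and satisfies $\|(H_N-E_0)\eta\|<2\delta$. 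Cutting at $j_1$ produces two approximate eigenvectors of the Dirichlet restrictions $H^{(1)}=H_{[1,j_1)}$ and $H^{(2)}=H_{(j_1,N]}$, which are stochastically independent; two independent applications of the single-interval Wegner bound of Proposition \ref{Proposition3} then give the required square, $\lesssim N^7\delta^2+e^{-c\sqrt N}$. Proposition \ref{Proposition5} then combines this with localization exactly as in your Step 1. Without this (or an equivalent) mechanism, your proof does not go through: convolving Bernoulli variables regularizes the one-point density of states but does not by itself yield the quadratic two-point bound, and generic simplicity of 1D spectra does not give the quantitative $\delta^2$ scaling you need for the Poisson limit.
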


This is the analogue of \cite {B2}, Proposition 5.
Again one could conjecture the above statement to hold under the sole assumption that $\lambda$ be sufficiently small.

Proposition 2 gives a natural example of a Jacobi Schr\"odinger operator with bounded density of states where the local
eigenvalue spacing distribution differs from that of the potential (cf. S.~Jitomirskaya's talk `Eigenvalue statistics for ergodic 
localization', Berkeley 11/10/2010.

Even with the smoothness of the IDS at hand, the arguments from \cite{B2} do not carry out immediately to the A-B setting.
For instance, the `classical' approach to Minami's inequality (see \cite{C-G-K}) rests also on regularity of the single site
distribution (in addition to a Wegner estimate) which makes it inapplicable in the A-B case.
This will require us to develop an alternative argument in order 
to deal with near resonant eigenvalues.

Roughly speaking, it turns out that for the analysis below, the following ingredients suffice.

\begin{itemize}
\item[(1)] The Furstenberg measures $\nu_E$ are absolutely continuous with bounded density
\item[(2)] The density of states $k$ is $C^1$
\end{itemize}

Hence the results from \cite{B2} for H\"older site distribution follow from the present treatment.
We believe however that the presentation in \cite {B2} remains of interest since it is considerably simpler than
the method from this paper.

As in \cite{B2}, the techniques are very much 1D and based on the usual
transfer matrix formalism.

Recall that
$$
M_n=M_n(E) =\prod^1_{j=n} \begin{pmatrix} E-v_j & -1\\ 1&0\end{pmatrix}
\eqno{(1.5)}
$$
and that the equation $H\xi=E\xi$ on the positive side is equivalent to
$$
M_n\begin{pmatrix} \xi_1\\ \xi_0\end{pmatrix} =\begin{pmatrix}
\xi_{n+1}\\ \xi_n\end{pmatrix}.\eqno{(1.6)}
$$
What follows will use extensively ideas and techniques developed in 
\cite{B1}, \cite{B2}.
\medskip

\section{Preliminary estimates}

Denote $\nu_E$ the Furstenberg measure at energy $E$.
This is the unique probability measure on $P_1(\mathbb R)\simeq S^1$
which is $\mu =\frac 12 (\delta_{g_+}+\delta_{g_-})$ - stationary, where
$$
g_+ = \begin{pmatrix} E+\lambda&-1\\ 1&0\end{pmatrix} \text { and }
g_- =\begin{pmatrix} E-\lambda& -1\\ 1&0\end{pmatrix}.\eqno{(2,1)}
$$
Thus
$$
\nu_E=\sum_g\mu(g) \tau_g^*[\nu_E]\eqno{(2.2)}
$$
where $\tau_g$ denotes the projective action of $g\in SL_2(\mathbb R)$.

It was proven in \cite{B1} that in the context of Proposition 1, $\nu_E$
is absolutely continuous wrt Haar measure on $S^1$ and moreover $\frac
{d\nu_E}{d\theta}$ becomes arbitrarily smooth for $\lambda \to\infty$.

The results of this section are stated for A-B Hamiltonians in general
however and rely on general random matrix product theory.

\begin{lemma}\label{Lemma1}
Let $\xi, \eta$ be unit vectors in $\mathbb R^2$.
Given $E$ and $\ve>0$, $N>C(\lambda)\log\frac 1\ve$ we have
$$
\mathbb E[|\langle M_N(E)(\xi), \eta\rangle| < \ve \Vert
M_N(E)(\xi)\Vert]\leq\tau(\ve)\eqno{(2.3)}
$$
where $\tau(\ve)=\max_{|I|=\ve} \nu_E(I), I\subset S^1$ an interval.
\end{lemma}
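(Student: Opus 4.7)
The plan is to recast the event as a small-arc event for the projective direction of $M_N(E)\xi$ on $S^1$ and then exploit the exponentially fast convergence of the random walk on $P_1(\mathbb R)$ to the Furstenberg stationary measure $\nu_E$. Set $\hat\xi_N := M_N(E)\xi/\Vert M_N(E)\xi\Vert$. The condition
$|\langle M_N(E)\xi,\eta\rangle| < \ve\,\Vert M_N(E)\xi\Vert$
is equivalent to $\hat\xi_N$ lying at angular distance $\lesssim \ve$ from the direction orthogonal to $\eta$; after projective identification, this is a single arc $J \subset S^1$ of length comparable to $\ve$.

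Next, under the randomness in $v_1,\dots,v_N$, the law of $\hat\xi_N$ equals the $N$-step distribution of the Markov chain on $S^1$ driven by $\mu = \tfrac12(\delta_{g_+}+\delta_{g_-})$ from $(2.1)$, started at $\hat\xi$. Because $\lambda\not=0$, the support of $\mu$ is strongly irreducible and proximal with positive Lyapunov exponent, so the Furstenberg/Le Page/Guivarc'h theory yields a quantitative spectral gap for the associated transfer operator on Hölder functions: there exist $\alpha\in(0,1)$, $C>0$, $\rho\in(0,1)$, all depending only on $\lambda$ (via, e.g., the restriction $|E|<2-\delta_0$), such that for every $\vp\in C^\alpha(S^1)$ and every starting direction $\hat\xi$,
$$
\bigl|\mathbb E[\vp(\hat\xi_N)] - \textstyle\int \vp\,d\nu_E\bigr| \leq C\,\Vert\vp\Vert_{C^\alpha}\,\rho^N.
$$

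With this in hand, I would dominate the indicator of $J$ by a smooth bump $\vp$ supported on a slightly enlarged arc $\widetilde J$ of length $(1+o(1))\ve$ with $\Vert\vp\Vert_{C^\alpha}\lesssim \ve^{-\alpha}$. The contraction estimate then gives
$$
\mathbb P(\hat\xi_N\in J) \leq \int \vp\, d\nu_E + C\,\ve^{-\alpha}\rho^N \leq \nu_E(\widetilde J) + C\,\ve^{-\alpha}\rho^N.
$$
Choosing the constant $C(\lambda)$ in the hypothesis $N>C(\lambda)\log(1/\ve)$ sufficiently large makes the error term $\ve^{-\alpha}\rho^N$ smaller than an arbitrary negative power of $\ve$, so it can be absorbed by the harmless enlargement of $J$ into $\widetilde J$, yielding $\mathbb P(\hat\xi_N\in J)\leq \tau(\ve)$ as claimed.

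The main obstacle is supplying the quantitative random-matrix-product input: one must quote (or reprove) a version of the Le Page--Guivarc'h spectral gap for our specific Bernoulli measure $\mu$ on $SL_2(\mathbb R)$, with convergence rate uniform in the starting direction $\hat\xi$ and with effective dependence of $\rho,C,\alpha$ on $\lambda$ and $\delta_0$. A secondary difficulty is passing from the Hölder duality bound to a clean bound by $\tau(\ve)$ rather than $O(1)\cdot\tau(\ve)+\ve^c$; this is precisely what dictates the logarithmic threshold $N>C(\lambda)\log(1/\ve)$ and may require the slight enlargement of the reference arc indicated above.
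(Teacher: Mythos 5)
Your proposal is correct and follows essentially the same route as the paper: recast the event as a small-arc condition on the projective direction of $M_N(E)\xi$, dominate the arc indicator by a smooth bump, and invoke the exponential equidistribution of the projective random walk to the Furstenberg measure (what you call the Le Page--Guivarc'h spectral gap, and what the paper calls the large-deviation estimate for the $\mu$-random walk) to bound $\mathbb E[f(\hat\xi_N)]$ by $\int f\,d\nu_E$ plus an error $\|f\|_{C^1}\,e^{-c(\lambda)N}$. The only cosmetic difference is that you work with $C^\alpha$ norms while the paper uses $C^1$, and you are slightly more careful than the paper about how the error term and the enlargement of the arc are absorbed into $\tau(\ve)$.
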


\begin{proof}

Let $\xi =e^{i\theta}$, $\eta= e^{i\psi}$.
If $M_N= g_N\cdots g_1, g_j\in \{g_+, g_-\}$, then
$$
\Big|\frac{\langle M_N(\xi), \eta\rangle}{\Vert M_N(\xi)\Vert}\Big|=
\big|\cos (\tau_{g_N\cdots g_1} (\theta) -\psi)\big|.
$$
Hence the l.h.s. of (2.3) is bounded by
$$
\begin{aligned}
&\mathbb E_{g_1, \ldots, g_N} \big[|\tau_{g_N\cdots g_1}(\theta)
-\psi'|<\ve\big]\qquad (\psi'=\psi^\bot)\\
&=\sum_g \mu^{(N)}(g) \ 1_{[\psi' -\ve, \psi'+\ve]} (\tau_g(\theta)).
\end{aligned}
\eqno{(2.4)}
$$
Let $0\leq f\leq 1$ be a smooth function on $S^1$ such that $f=1$ on
$[\psi'-\ve, \psi' +\ve]$, supp\,$f \subset [\psi' -2\ve, \psi'+2\ve]$
and $|\partial^\alpha f|\lesssim_\alpha \big(\frac 1\ve\big)^\alpha$.
Then, invoking the large deviation estimate for the $\mu$-random walk,
we obtain
$$
\begin{aligned}
(2.4)&\leq \sum_g \mu^{(N)} (g) (f\circ\tau_g)\\
&\leq \int f d\nu_E + e^{-c(\lambda)N}\Vert f\Vert_{C^1}\\
&\leq \nu_E ([\psi'-2\ve, \psi'+2\ve])+\frac 1\ve \, e^{-c(\lambda)N}
\end{aligned}
$$
proving the lemma.
\end{proof}

\begin{lemma}\label{Lemma2}
Assume the Lyapounov exponent H\"older regular of exponent $\alpha>0$.
Then
$$
\max_{|E-E_0|<\kappa} \log \Vert M_N(E)\Vert < L(E_0)N+c\kappa^\alpha N
\eqno{(2.5)}
$$
outside a set $\Omega$ of measure at most $e^{-\kappa'N}$.
\end{lemma}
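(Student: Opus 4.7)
The plan is a net-argument over the energy interval $[E_0-\kappa, E_0+\kappa]$ combining three ingredients: (i) the single-energy large deviation estimate
$$\Prob\bigl[\log\|M_N(E)\| > L(E)N + \eta N\bigr] \leq e^{-\gamma(\eta,\lambda) N}$$
for the $\mu$-random walk on $SL_2(\mathbb{R})$, available in any A-B model by classical theory; (ii) the H\"older hypothesis $|L(E)-L(E_0)| \leq c_0 |E-E_0|^\alpha$; (iii) the fact that $M_N(E)$ is a polynomial of degree $N$ in $E$, so $\|\partial_E M_N(E)\| \leq N K^{N-1}$ with $K=K(\lambda,\delta_0)$ a uniform bound on each factor matrix. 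Hence $E\mapsto M_N(E)$ is $NK^{N-1}$-Lipschitz, deterministically in the disorder realization.

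First I would fix a constant $C > \log K + 1$ and choose a $\delta$-net $\mathcal E \subset [E_0-\kappa, E_0+\kappa]$ with $\delta = e^{-CN}$, so that $|\mathcal E| \leq 2\kappa e^{CN}$ and $NK^{N-1}\delta \leq 1$. Applying (i) at each $E \in \mathcal E$ with $\eta = \kappa^\alpha$ and union-bounding, the failure set has measure at most
$$2\kappa e^{CN} e^{-\gamma(\kappa^\alpha,\lambda)N} \leq e^{-\kappa' N},$$
as soon as the LDP rate dominates, i.e.\ $\gamma(\kappa^\alpha,\lambda) > C + \kappa'$. Off this set, each $E \in \mathcal E$ satisfies $\log\|M_N(E)\| \leq L(E)N + \kappa^\alpha N$, and by (ii) this is at most $L(E_0)N + (c_0+1)\kappa^\alpha N$.

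To upgrade to uniform control over the full interval, for arbitrary $E \in [E_0-\kappa, E_0+\kappa]$ pick the nearest net point $E_*$ and write
$$\|M_N(E)\| \leq \|M_N(E_*)\| + NK^{N-1}\delta \leq \|M_N(E_*)\| + 1.$$
Since $\det M_N \equiv 1$ forces $\|M_N(E_*)\| \geq 1$, the additive $+1$ is absorbed into the exponential bound on $\|M_N(E_*)\|$, yielding the claimed $\log\|M_N(E)\| \leq L(E_0)N + c\kappa^\alpha N$ uniformly in $E$.

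The one delicate point is parameter balancing: the union bound costs a factor $e^{CN}$ with $C$ dictated by the a priori Lipschitz bound ($C > \log K$), so one must ensure the LDP rate $\gamma(\kappa^\alpha,\lambda)$ beats $C + \kappa'$. For i.i.d.\ matrix products this is supplied by the quantitative LDP of Le Page/Guivarc'h, and the behavior of $\gamma(\eta)$ as $\eta\to 0$ controls the admissible range of $\kappa'$ in the statement. Note that nothing in this argument uses the algebraic hypothesis of Proposition \ref{Proposition1}, consistent with the remark preceding Lemma \ref{Lemma2} that the estimate is stated for general A-B Hamiltonians.
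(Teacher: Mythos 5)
Your plan correctly identifies the three ingredients (large deviations, H\"older regularity of $L$, a net over the energy window) and you are right to flag the parameter balancing as the delicate point. Unfortunately, the balancing does not close, and the gap is not a technicality.

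The obstruction is that your net spacing $\delta=e^{-CN}$ is forced by the crude Lipschitz bound $\|\partial_E M_N\|\leq N K^{N-1}$, and $K$ is a fixed absolute constant of order $|E|+\lambda+1\approx 4$, so $C>\log K$ is a fixed constant of order $1$. The union bound over $\mathcal E$ therefore costs a factor $e^{CN}$ with $C$ bounded below independently of $\kappa$. But the large-deviation rate $\gamma(\kappa^\alpha,\lambda)$ for the event $\{\log\|M_N(E)\|>L(E)N+\kappa^\alpha N\}$ tends to $0$ as $\kappa\to 0$ (heuristically $\gamma(\eta)\sim\eta^2/\sigma^2$ by the CLT for $\log\|M_N\|$), so for small $\kappa$ — exactly the regime in which the lemma is used in the paper — the inequality $\gamma(\kappa^\alpha,\lambda)>C+\kappa'$ is false for every choice of $\kappa'>0$. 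Invoking Le Page's quantitative LDP does not help: no version of it makes the rate at deviation $\kappa^\alpha$ exceed a fixed constant uniformly as $\kappa\to 0$.

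The paper resolves exactly this mismatch by \emph{decoupling the net spacing from the Lipschitz constant}. It takes a coarse net of spacing $\theta=e^{-\frac12\kappa' N}$ adapted to the LDP rate $\kappa'$ (so $|\mathcal E|\lesssim e^{\frac12\kappa' N}$ and the union bound loses only $e^{\frac12\kappa' N}$, which the LDP gain $e^{-\kappa' N}$ comfortably absorbs). To pass from a net point $E_1$ to a nearby $E$, one cannot afford the naive bound $NK^{N-1}|E-E_1|$; instead the paper Taylor-expands $M_N(E)$ around $E_1$ to order $r\sim 1/\kappa'$ (display (2.10)), writing each derivative as a sum of products of cut transfer matrices $M_{N-\ell_1}\big(\begin{smallmatrix}1&0\\0&0\end{smallmatrix}\big)M_{\ell_1-\ell_2-1}\cdots$, and controls each such product by a simultaneous LDP estimate (2.8) for the pieces. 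This gives $\|$derivative term$\|\lesssim e^{(L(E_0)+\kappa_1)N}$ with no extra $K^N$ factor, and the remainder $O(C^N\theta^{r+1})$ is killed by choosing $r\sim 1/\kappa'$. So the higher-order Taylor step with the extended LDP for split products is not optional polish; it is precisely what replaces the untenable single-step Lipschitz bound. To repair your argument you would need to reproduce this mechanism (or an equivalent one, e.g.\ an a priori Wegner-type bound on the gradient conditioned on the good event).
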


\begin{proof}
Recall the large deviation theorem for the Lyapounov exponent
$$
\mathbb E\Big[\Big|\frac 1N\log \Vert M_N(E)\Vert - L(E)\Big|>\sigma
\Big] \lesssim e^{-\sigma'N}.
\eqno{(2.6)}
$$
Set $\kappa_1 =C\kappa^\alpha$.
It follows from (2.6) that for $|E-E_0|<\kappa$
$$
\mathbb E[\log \Vert M_N(E)\Vert> L(E_0)N+ \kappa_1 N]\lesssim
e^{-\kappa'N}\eqno{(2.7)}
$$
since $|L(E)-L(E_0)|\lesssim\kappa^\alpha$.

More generally, given indices $N>\ell_1>\ell_2> \cdots> \ell_r>1$
$\big(r=O(1)\big)$, we have for $|E-E_0|<\kappa$ that
$$
\begin{aligned}
\mathbb E[\log \Vert M_{N-{\ell_1}}^{v_N, \ldots, v_{\ell_1+1}}
(E)\Vert&+\log \Vert M_{\ell_1-\ell_2-1}^{(v_{\ell_1-1}, \ldots,
v_{\ell_2+1})}(E)\Vert +\cdots+\log\Vert M_{\ell_r-1}^{(v_{\ell_r-1},
\ldots, v_1)}(E)\Vert\\
&> L(E_0)N+\kappa_1 N] \lesssim
e^{-\kappa'N}.
\end{aligned}
\eqno{(2.8)}
$$

Set $\theta =e^{-\frac 12 \kappa' N}$ and let $\mathcal E$ be a finite subset of
$|E-E_0|<\kappa$, $|\mathcal E|<\frac 1\theta$ such that
$\max_{|E-E_0|<\kappa} \dist (E, \mathcal E)<\theta$.
Take $r=r(\kappa)$ and $\Omega\subset \{1, -1\}^N$ such that (2.8) holds
for $V\not\in\Omega, E\in\mathcal E$ and all $N>\ell_1>\cdots>
\ell_r>1$,
$$
|\Omega|\lesssim N^r.|\mathcal E| e^{-\kappa'N} < e^{-\frac
13\kappa'N}.\eqno{(2.9)}
$$
Take then $E\in [E_0-\kappa, E_0+\kappa]$ and $E_1\in\mathcal E|, E-E_1|<\theta$.
Using a truncated Taylor expansion, we get
$$
\begin{aligned}
&M_N(E)=M_N(E_1) +\Big[\sum_{1\leq \ell\leq N}
M_{N-\ell}(E_1)\begin{pmatrix} 1&0\\ 0&0\end{pmatrix} M_{\ell-1}
(E_1)\Big] (E-E_1)+\cdots\\
&+\frac 1{r!} \Big[\sum_{1\leq \ell_1< \ell_2<\cdots<\ell_r\leq N}
M_{N-\ell_1} (E_1) \begin{pmatrix} 1&0\\0&0\end{pmatrix}
M_{\ell_1-\ell_2-1} (E_1) \begin{pmatrix} 1&0\\0&0\end{pmatrix}\cdots
M_{\ell_r-1} (E_1)\Big] (E-E_1)^r\\[12pt]
&+O(C^N\theta^{r+1}).\end{aligned}
\eqno{(2.10)}
$$ 
Taking $r\sim \frac 1{\kappa'}$, we ensure the remainder term $<
e^{-N}$, while for $V\not\in \Omega$
$$
\Vert(1.8)\Vert < e^{(L(E_0)+\kappa_1)N} +\theta
e^{(L(E_0)+\kappa_1)N}+\cdots+ \theta^r e^{(L(E_0)+\kappa_1)N}
$$
Lemma 2 follows.
\end{proof}

\medskip
\section
{A Wegner estimate}

\begin{proposition}\label{Proposition3}
Assume the Furstenberg measures of $H$ have bounded density.
Then
$$
\mathbb E[\Spec H_N\cap I\not= \phi]< CN.|I|+C e^{-cN}\eqno{(3.1)}
$$
if $I\subset R$ is an interval.

\end{proposition}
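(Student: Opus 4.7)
The plan is to reduce the event $\{\Spec H_N\cap I\not=\phi\}$ to two pieces, both controlled by Lemma~\ref{Lemma1} via the Pr\"ufer angle. Introduce $\theta_N(E):=\arg M_N(E)e_1\in\mathbb R/\pi\mathbb Z$; by (1.6), $E$ is a Dirichlet eigenvalue of $H_N$ iff $M_N(E)e_1\parallel e_2$, iff $\theta_N(E)\equiv\pi/2\pmod\pi$, and Sturm oscillation ensures that $\theta_N$ is strictly increasing in $E$. Writing $I=[E_-,E_+]$ and $\Delta:=\theta_N(E_+)-\theta_N(E_-)$, for any $L\in(0,\pi)$ monotonicity yields the deterministic inclusion
\[
\{\Spec H_N\cap I\not=\phi\}\subset\{\Delta\geq L\}\cup\bigl\{\theta_N(E_-)\bmod\pi\in[\pi/2-L,\pi/2]\bigr\},
\]
since on $\{\Delta<L\}$ the sweep of $\theta_N$ across $I$ meets $\pi/2+\pi\mathbb Z$ only if $\theta_N(E_-)\bmod\pi$ already lies within $L$ below $\pi/2$.

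The second event is of the form handled by Lemma~\ref{Lemma1}: applied at $E=E_-$ with $\xi=\eta=e_1$, the set $\{\theta_N(E_-)\bmod\pi\in[\pi/2-L,\pi/2]\}$ is contained in $\{|\cos\theta_N(E_-)|\leq\sin L\}$, so taking $\varepsilon=\sin L\asymp L$ and using the hypothesis that $\nu_{E_-}$ has bounded density (so $\tau(\varepsilon)\lesssim\varepsilon$), one gets
\[
\Prob\bigl[\theta_N(E_-)\bmod\pi\in[\pi/2-L,\pi/2]\bigr]\leq CL+\frac{e^{-c(\lambda)N}}{L}.
\]

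To bound $\Prob[\Delta\geq L]$, I would set $L:=2C_0 N|I|$, the case $L\geq\pi$ being trivial since then $\Prob\leq 1\leq (2C_0/\pi)N|I|$. The hypothesis that $\nu_E$ has bounded density transfers, via the Pr\"ufer/transfer-matrix representation $\pi N\mathcal N(E)=\lim_N\mathbb E[\theta_N(E)]$, to a pointwise bound on the IDS density $k(E)$, so $\mathbb E[\Delta]=\pi N(\mathcal N(E_+)-\mathcal N(E_-))\leq C_0 N|I|$. A large-deviation estimate for $\theta_N/N$ about its mean---proved by the same discretize-in-$E$/telescope-in-$N$ scheme as Lemma~\ref{Lemma2} but applied to the angular cocycle in place of $\log\Vert\,\cdot\,\Vert$---then yields $\Prob[\Delta\geq 2\mathbb E[\Delta]]\leq e^{-c'N}$. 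Combining the two pieces and arranging $L$ so that the error $e^{-cN}/L$ is absorbed into $e^{-c''N}$ gives $\Prob[\Spec H_N\cap I\not=\phi]\leq CN|I|+Ce^{-cN}$, i.e.\ (3.1).

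The main obstacles are the two ingredients used above that are not in the preliminary section: pointwise boundedness of the IDS density $k$ inherited from bounded density of $\nu_E$, and the exponential concentration of the Pr\"ufer increment $\Delta$ about its mean. Both should follow by adapting the LDP machinery for the angular cocycle from \cite{B1} in the spirit of Lemma~\ref{Lemma2}, but this adaptation is the technical heart of the argument.
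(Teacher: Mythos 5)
Your route via Pr\"ufer angles is genuinely different from the paper's, which works directly with $\log\Vert M_N(E)e_1\Vert$: the paper observes that if $E\in I$ is a Dirichlet eigenvalue then $\Vert M_N(E)e_1\Vert\leq 1$, whereas the Lyapunov LDP makes $\log\Vert M_N(E_0)e_1\Vert$ large outside an exponentially small set, and it then bounds the $t$-derivative of $\log(\Vert M_N(E_0+t)e_1\Vert+B)$ over $I$ using Lemma~\ref{Lemma1} and a Markov/Tchebychev argument. Your decomposition of $\{\Spec H_N\cap I\neq\phi\}$ into $\{\Delta\geq L\}\cup\{\theta_N(E_-)\bmod\pi\in[\pi/2-L,\pi/2]\}$ is valid by Sturm monotonicity, and the second piece is indeed exactly what Lemma~\ref{Lemma1} controls.

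However, the treatment of $\{\Delta\geq L\}$ has a fatal gap. You claim an exponential concentration $\Prob[\Delta\geq 2\mathbb E[\Delta]]\leq e^{-c'N}$ with $\mathbb E[\Delta]\leq C_0 N|I|$ by adapting the LDP machinery. This cannot hold in the regime $N|I|\ll 1$, which is the relevant regime here. Since $\Delta$ is essentially $\pi$ times the eigenvalue count of $H_N$ in $I$ (up to a bounded remainder), and the whole point of this paper is that these counts are asymptotically Poisson with intensity $\sim k(E_0)\,N|I|$, one has $\Prob[\Delta\geq\pi]\sim N|I|$ rather than $e^{-c'N}$. A concentration bound of the strength you assert would contradict Proposition~\ref{Proposition2}. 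More to the point, $\Prob[\Delta\geq L]$ with $L\sim N|I|<\pi$ is essentially the Wegner probability $\Prob[\Spec H_N\cap I\neq\phi]$ itself, so your reduction is circular: the ``bad'' event was not made smaller than the target. The standard Lyapunov LDP, and the discretize-and-telescope scheme of Lemma~\ref{Lemma2}, control deviations of $\frac1N\log\Vert M_N\Vert$ at scale $\sigma N$ with $\sigma$ fixed; they do not give concentration of the angular increment $\theta_N(E_+)-\theta_N(E_-)$ at the $o(1)$ scale you need. You would also need to justify the finite-$N$ bound $\mathbb E[\Delta]\leq C_0 N|I|$ (an expectation Wegner estimate), which again is not a consequence of the preliminary lemmas and is of the same nature as what Proposition~\ref{Proposition3} asserts. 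The paper's integral-of-the-logarithmic-derivative argument avoids all of this by never needing a probabilistic bound on the number of crossings: it only uses that one crossing forces $\Vert M_N(E)e_1\Vert\leq 1$, and then extracts the factor $\delta=|I|/2$ from the length of the integration interval in (3.6).
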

\medskip

\begin{proof}
What follows is an adaptation of the argument used in \cite{B2}.
Let $I=[E_0-\delta, E_0+\delta]$ and assume $H_N$ has an eigenvalue
$E\in I$ with eigenvector $\xi =(\xi_j)_{1\leq j\leq N}$.
Then
$$
M_N(E)\begin{pmatrix} \xi_1\\ 0\end{pmatrix} =\begin{pmatrix} 0\\ \xi_N
\end{pmatrix}.
$$
Assume $|\xi_1|\geq |\xi_N|$ (otherwise, we replace $M_N$ by
$M_N^{-1}$).
It follows that
$$
\Vert M_N(E) e_1\Vert\leq 1.\eqno{(3.2)}
$$
On the other hand, from the large deviation theorem
$$
\log \Vert M_N(E_0)e_1\Vert > L(E_0)N-\kappa N\eqno{(3.3)} 
$$
for $V\not\in\Omega$, where
$$
|\Omega|< e^{-\kappa'N}.\eqno{(3.4)}
$$
Here $\kappa>0$ is an appropriate constant.

In view of Lemma 2, we may moreover assume that for $|E-E_0|<\delta$
$$
\max_n\Vert M_{N-n}^{(v_N, \ldots, v_{n+1})} (E)\Vert \
\Vert M_{n-1}^{(v_{n-1}, \ldots, v_n)} (E)\Vert < e^{L(E_0)N+\kappa
N}\eqno{(3.5)}
$$
if $V\not\in \Omega$.

Denote
$$
B=e^{L(E_0)N-2\kappa N}.
$$
Then, for $V\not\in \Omega$,
$$
\begin{aligned}
\kappa N&\leq \big|\log (\Vert M_N(E)e_1\Vert+B)-\log (\Vert
M_N(E_0)e_1\Vert +B)\big|\\
&\leq \int_{-\delta}^{\delta} \Big|\frac d{dt} \log (\Vert
M_N(E_0+t)e_1\Vert+B)\big| dt.
\end{aligned}\eqno{(3.6)}
$$
The integrand in (3.6) is clearly bounded by
$$
\sum_{j=1, 2}\, \sum^N_{n=1} \frac{|\langle M_{N-n}^{(v_N, \ldots,
v_{n+1})} (E_0+t) e_1, e_j\rangle|\, |\langle M_{n-1}^{(v_{n-1}, \ldots,
v_1)}(E_0+t)e_1, e_1\rangle |}{\Vert M_N^{(v_N, \ldots, v_1)}
(E_0+t) e_1\Vert +B}
\eqno{(3.7)}
$$
and we estimate the $n$-norm by
$$
\frac {\Vert \big(M_{N-n}^{(v_N, \ldots, v_{n+1})}(E_0+t)\big)^* 
e_j\Vert .\Vert M_{n-1}^{(v_{n-1}, \ldots, v_1)} (E_0+t) e_1\Vert}
{|\langle\big( M_{N-n}^{v_n, \ldots, v_{n+1}}(E_0+t)\big)^* e_j,
M_{n-1}^{(v_{n-1}, \ldots, v_1)} (E_0+t) e_1\rangle|}.\eqno{(3.8)}
$$
We distinguish two cases.
If $n\geq \frac N2$, set
$$
\eta =\frac {(M_{N-n}^{(v_N, \ldots, v_{n+1})} (E_0+t)\big)^* e_j}
{\Vert (M_{N-n}^{(v_{N},\ldots, v_{n+1})} (E_0+t)^* e_j\Vert}
$$
which is independent from $v_1, \ldots, v_n$.
From Lemma 1, we get the distributional inequality
$$
\mathbb E_{v_1, \ldots, v_{n-1}}[|\langle M_{n-1}^{v_{n-1}, \ldots,
v_1)}(E_0+t) e_1 , \eta\rangle |< \ve\Vert M_{n-1}^{(v_{n-1}, \ldots,
v_1)} (E_0+t)e_1\Vert]\leq C\ve\eqno{(3.9)}
$$
since by assumption $\tau(E) \leq C\ve$.
If $n<\frac N2$, set
$$
\eta =\frac {M_{n-1}^{v_{n-1}, \ldots, v_1)} (E_0+t) e_1}
{\Vert M_{n-1}^{(v_{n-1}, \ldots, v_1)}(E_0+t)e_1\Vert}
$$
and argue similarly, considering $\big( M_{N-n}^{(v_N, \ldots, v_{n-1})} (E_0+t)\big)^* $.

Hence we proved that
$$
\mathbb E_{v_1, \ldots, v_N}[(3.8)>\lambda]< C\lambda^{-1}\text { for }
\lambda <c\log N.\eqno{(3.10)}
$$
On the other hand, the $n$-term in (3.7) is also bounded by
$$
\frac 1B\Vert M_{N-n}^{(v_N, \ldots, v_{n+1})}(E_0+t)\Vert.\Vert
M_{n-1}^{(v_{n-1},\ldots, v_1)} (E_0+t)\Vert < e^{3\kappa
N}\eqno{(3.11)}
$$
since $V\not\in \Omega$, by (3.5).
Therefore, taking $\kappa$ in (3.11) appropriately, according to (3.10),
it follows that
$$
\mathbb E[(3.7)\, 1_{\Omega^c}] \lesssim N^2.\eqno{(3.12)}
$$
Consequently, recalling (3.6), we obtain from (3.12) and Tchebychev's
inequality
$$
\mathbb E[\Spec H_N\cap I\not= \phi]\leq |\Omega|+C\frac \delta{\kappa
N}
N^2< e^{-\kappa'N}+\frac c\kappa \delta N
$$
proving (3.1).
\end{proof}
\medskip

Let $H$ be as in Proposition 1 on the sequel

The energy range is restricted to $[-2+\delta_0, 2-\delta_0]$ according
to Proposition~1.

Using Proposition 3 and Anderson localization, one deduces then the
analogue of Proposition 3 in \cite{B2}.
We leave the details to the reader (see \cite{B2}).

\begin{proposition}\label{Proposition4}

Assuming $\log \frac 1\delta < c(\lambda) N$, we have for
$I=[E_0-\delta, E_0+\delta]$ that
$$
\mathbb E[Tr \mathcal X_I(H_N)] =Nk(E_0)|I| +O\Big(N\delta^2+\delta \log^2\Big(N+\frac 1\delta\Big)\Big).
\eqno{(3.13)}
$$
\end{proposition}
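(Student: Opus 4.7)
The plan is to follow the argument of \cite{B2}, Proposition~3, replacing its Hölder Wegner estimate by Proposition \ref{Proposition3} and invoking the $C^1$-regularity of $k$ furnished by Proposition \ref{Proposition1}. The eigenvalues of $H_N$ in $I$ are counted according to their localization centers: centers in the bulk contribute the main term $Nk(E_0)|I|$ via translation invariance and the definition of the IDS, while boundary centers and exceptional configurations yield only small errors.

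The first step is to establish effective Anderson localization at scale $L:=C(\lambda)\log(N+\tfrac{1}{\delta})$. Combining Lemmas \ref{Lemma1}--\ref{Lemma2} (positivity of the Lyapunov exponent together with the uniform large-deviation bounds) with the Wegner estimate of Proposition \ref{Proposition3}, a standard multi-scale/resolvent argument yields an event $\Omega_0\subset\{-1,1\}^N$ with $|\Omega_0|<e^{-c(\lambda)N}$, outside of which every eigenfunction $\xi_j$ associated to an eigenvalue in a slight enlargement of $I$ admits a localization center $n_j\in[1,N]$ satisfying $|\xi_j(m)|\lesssim e^{-\gamma|m-n_j|}$ for $|m-n_j|\geq L$.

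With this in hand, split $[1,N]=G\sqcup B$, with $B$ the set of sites within distance $L$ of the endpoints (so $|B|\lesssim L$), and write $\Tr\mathcal X_I(H_N)=T_G+T_B$ according to the location of the $n_j$. For $n\in G$, exponential decay implies that the event ``$n$ is the center of an $H_N$-eigenvalue in $I$'' is, up to probabilistic error $e^{-cL}$, determined by $V|_{[n-L,n+L]}$; by translation invariance of the IID potential, the associated probability $P$ is independent of $n$. By the definition of the IDS (applied to a box of size $\sim L$) and $C^1$-smoothness of $k$,
$$
P \;=\; \int_I k(E)\,dE+O(e^{-cL}) \;=\; k(E_0)|I|+O(\delta^2+e^{-cL}),
$$
so that $\mathbb E[T_G]=Nk(E_0)|I|+O(N\delta^2+Ne^{-cL})$. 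The boundary contribution $\mathbb E[T_B]$ is controlled by applying Proposition \ref{Proposition3} to a window of length $O(L)$ at each endpoint, giving $\mathbb E[T_B]\lesssim L\delta+e^{-cN}$; the event $\Omega_0$ adds at most $Ne^{-cN}$ to the expected trace. Under the hypothesis $\log\tfrac{1}{\delta}<c(\lambda)N$, all exponentially small terms are absorbed into the stated $O(N\delta^2+\delta\log^2(N+\tfrac{1}{\delta}))$ error, with the extra logarithmic factor coming from the multi-scale loss in the matching to finite-$L$ boxes.

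The main obstacle is the first step: establishing Anderson localization at the logarithmic scale $L$ in the A-B setting, where, unlike in \cite{B2}, no single-site regularity is available to drive the multi-scale induction. One must rely exclusively on Proposition \ref{Proposition3} and the transfer-matrix control of Section~2. Once this quantitative localization is in hand, the remaining bookkeeping (decomposition, translation invariance, boundary estimates) is standard and proceeds exactly as in \cite{B2}.
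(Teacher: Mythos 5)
The paper does not actually prove Proposition~4: the text immediately preceding it says only that the statement ``follows from Proposition~3 and Anderson localization'' and refers the reader to \cite{B2} for details. Your proposal is therefore a reconstruction rather than a comparison, and its overall shape --- localize at a logarithmic scale, split $[1,N]$ into bulk and boundary, use translation invariance and the smoothness of $k$ in the bulk, and control the boundary with the Wegner bound of Proposition~3 --- is precisely what the author is gesturing at, and the resulting error bookkeeping is consistent with (3.13).

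Two points should be corrected. First, what you single out as the ``main obstacle'' is not one. Quantitative Anderson localization for 1D Bernoulli potentials at logarithmic scale is classical (Carmona--Klein--Martinelli) and is obtained from positivity of the Lyapunov exponent, the large-deviation bound (2.6), and the uniform control of Lemma~\ref{Lemma2}; it requires neither single-site regularity nor a multi-scale/resolvent induction, and in fact the phrase ``multi-scale'' is out of place in the 1D transfer-matrix framework. The paper takes this localization for granted --- it is exactly the statement (4.15) used without further comment in the proof of Proposition~\ref{Proposition5}. Second, your localization scale $L=C\log(N+\tfrac{1}{\delta})$ does not match the error term $\delta\log^2(N+\tfrac1\delta)$ in (3.13), whereas the scale $M=C\log^2(N+\tfrac1\delta)$ that the paper actually employs in the proof of Proposition~\ref{Proposition5} reproduces that boundary term as $M\delta$. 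If the $\log$-scale works, your bound is formally sharper than (3.13); but given the paper's systematic use of the $\log^2$ scale (and the $e^{-c\sqrt{N}}$ losses that appear elsewhere in the A-B analysis, e.g.\ Lemma~\ref{Lemma3}), you should verify that the failure probability of your localization event is indeed $e^{-cL}$ rather than only $e^{-c\sqrt{L}}$ before insisting on the shorter scale. Finally, the step $P=\int_I k\,dE+O(e^{-cL})$ relating the probability that a fixed bulk site is a localization center to the density of states, while standard, deserves to be spelled out: it follows by summing $P_n$ over $n$ and comparing with the thermodynamic-limit definition of the IDS, rather than being a direct ``application of the definition of the IDS to a box of size $L$.''
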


\section
{Near resonances}

In what follows, we develop an alternative to Minami's argument that is applicable in the A-B context
(recall that $H=H_\lambda$ with $\lambda$ satisfying the conditions from Proposition 1).

\begin{lemma}\label{Lemma3}
Let $I={E_0-\delta, E_0+\delta}$ be as above. Let $N\in\mathbb Z_+$.

The probability for existence of a pair of orthogonal unit vectors $\xi, \xi'\in\mathbb R^n$ satisfying
$$
\Vert (H_N-E_0)\xi\Vert_2 <\delta, \Vert(H_N-E_0)\xi'\Vert_2<\delta\eqno{(4.1)}
$$
$$
\max_{j<\sqrt N} (|\xi_j|, |\xi_{N-j}|, |\xi_j'|, |\xi_{N-j}'|)<\frac 1{N^{10}}\eqno{(4.2)}
$$
is at most
$$
CN^7 \delta^2 +e^{-c\sqrt N}.\eqno{(4.3)}
$$
\end{lemma}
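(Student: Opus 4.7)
The strategy is to convert the existence of the orthogonal pair $(\xi, \xi')$ into a statement about two eigenvalues of $H_N$ in $I$ whose eigenvectors are small near both boundaries, and then to estimate the probability of this event via the transfer-matrix machinery of Lemmas 1 and 2. By self-adjointness of $H_N$ and the min--max principle, (4.1) together with the orthogonality $\xi \perp \xi'$ forces the existence of two eigenvalues $E_1, E_2 \in [E_0 - \sqrt 2\delta, E_0 + \sqrt 2\delta]$ with orthonormal eigenvectors $\psi_1, \psi_2$ spanning the same two-dimensional subspace as $\xi, \xi'$. Since the pointwise quantity $|\psi_1(j)|^2 + |\psi_2(j)|^2$ equals the diagonal entry of the orthogonal projector onto this subspace and is therefore basis-independent, condition (4.2) transfers verbatim: $|\psi_k(j)|, |\psi_k(N-j)| \lesssim N^{-10}$ for $j < \sqrt N$ and $k = 1, 2$.

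Set $f(E) := e_1^* M_N(E) e_1$, so each eigenvalue $E_k$ is a zero of $f$. Two zeros in $[E_0-\delta, E_0+\delta]$ force both $|f(E_0)| \lesssim \delta^2$ and $|f'(E_0)| \lesssim \delta$ on a suitable normalized scale --- two smallness constraints accounting for the $\delta^2$ factor in (4.3). Using the truncated Taylor expansion (2.10), one exhibits $f(E_0) = \langle M_N(E_0) e_1, e_1\rangle$ and $f'(E_0) = \sum_{\ell=1}^N \langle (M_{N-\ell}(E_0))^* e_1,\, M_{\ell-1}(E_0) e_1\rangle$ as scalar products of vectors produced by transfer-matrix products on disjoint index sets. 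Conditioning on the randomness to one side of a cut-point $\ell$ renders the other factor a deterministic unit direction, and Lemma 1 then bounds the probability that the normalized scalar product is $<\varepsilon$ by $\tau(\varepsilon) \lesssim \varepsilon$ using the bounded density of $\nu_{E_0}$.

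Applying Lemma 1 in tandem to $f(E_0)$ and to a dominant summand of $f'(E_0)$ --- the existence of such a dominant summand being guaranteed outside the large-deviation set of Lemma 2, run at scale $\sqrt N$ to produce the $e^{-c\sqrt N}$ error --- yields two independent factors of $\delta$. Condition (4.2), transferred to $\psi_1,\psi_2$, enters to certify that the relevant intermediate transfer-matrix trajectories actually lie in the generic directions to which Lemma 1 applies, rather than in degenerate boundary-sensitive configurations. The polynomial $N^7$ loss accumulates from the $O(N)$ choices of cut-point $\ell$, an $O(N^c)$ discretization of $E$ in $I$, and the polynomial corrections from the boundary smallness $N^{-10}$ in (4.2) that propagates through Lemma 1 when bounding the angle between transfer-matrix images and prescribed directions.

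The main obstacle will be establishing \emph{effective independence} of the smallness events $\{|f(E_0)| \lesssim \delta^2\}$ and $\{|f'(E_0)| \lesssim \delta\}$, since both are functionals of the same random potential $V$ --- this is precisely where the standard Minami proof would invoke regularity of the site distribution, which is not available in the A-B setting. The resolution is to select the dominant cut-point $\ell_0$ in the expansion of $f'(E_0)$, condition on the random variables on one side of $\ell_0$, and observe that the remaining randomness then governs $f(E_0)$ and the dominant term of $f'(E_0)$ in an effectively independent fashion, so that Lemma 1 applies separately to each. A careful union bound over $\ell_0$ and over the energy grid then delivers the stated bound $CN^7 \delta^2 + e^{-c\sqrt N}$.
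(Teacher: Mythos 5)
Your strategy (passing to the characteristic function $f(E)=\langle M_N(E)e_1,e_1\rangle$ and forcing both $f(E_0)$ and $f'(E_0)$ to be small) is genuinely different from the paper's, and the key step has a gap that I do not think can be closed along the lines you sketch.

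The paper's route is the following. From $\xi\perp\xi'$ and (4.2) it locates an interior index $j_1\in(\sqrt N, N-\sqrt N)$ and a unit-norm linear combination $\eta$ of $\xi,\xi'$ with $\eta_{j_1}=0$ but with $\|\eta\|_2$ bounded below polynomially on each side of $j_1$. Splitting at $j_1$ gives $\|(H^{(1)}-E_0)\eta^{(1)}\|<2\delta$ and $\|(H^{(2)}-E_0)\eta^{(2)}\|<2\delta$ for the two restrictions $H^{(1)}=H_{[1,j_1)}$, $H^{(2)}=H_{(j_1,N]}$, forcing $\dist(E_0,\Spec H^{(i)})<N^2\delta$ for $i=1,2$. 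Because $H^{(1)}$ and $H^{(2)}$ depend on disjoint sets of $v_n$'s, they are genuinely independent, and two applications of the Wegner estimate (Proposition 3) give $\delta^2$ with a polynomial loss; a union over $j_1$ and the $e^{-c\sqrt N}$ tail from block sizes $\geq\sqrt N$ complete the bound. Here (4.2) is essential precisely to guarantee the cut point is in the bulk, so both blocks have length $\geq\sqrt N$.

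In your proposal, the claim that conditioning on $v_1,\dots,v_{\ell_0}$ renders $\{|f(E_0)|\text{ small}\}$ and $\{|f'(E_0)|\text{ small}\}$ "effectively independent" does not hold. After such conditioning, $f(E_0)=\langle M_{N-\ell_0}M_{\ell_0}e_1, e_1\rangle=\langle M_{\ell_0}e_1, (M_{N-\ell_0})^*e_1\rangle$, and the dominant term of $f'(E_0)$ at that cut is $\langle M_{\ell_0-1}e_1,(M_{N-\ell_0})^*e_1\rangle$. Both events constrain the \emph{same} random direction $(M_{N-\ell_0})^*e_1$: it must be nearly orthogonal to the fixed direction $M_{\ell_0}e_1$ and nearly orthogonal to the fixed direction $M_{\ell_0-1}e_1$. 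Those two directions are generically at a definite angular distance, in which case the two events are essentially mutually exclusive; and when they happen to be close, the two events nearly coincide. Either way one application of Lemma 1 gives $O(\delta)$, not $O(\delta^2)$. Obtaining a genuine product bound this way would require a two-sided independence statement (e.g.\ near-independence of the forward and backward directions of the $M_{N-\ell_0}$ product) that neither Lemma 1 nor Lemma 2 provides and that you do not establish. The paper deliberately avoids this by extracting independence from spatial disjointness rather than from the transfer-matrix product.

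There is also a secondary issue in the initial reduction: two orthogonal approximate eigenvectors produce two eigenvalues in a slightly larger interval by min--max, but the corresponding exact eigenvectors $\psi_1,\psi_2$ need not span the same plane as $\xi,\xi'$ (e.g.\ if $H_N$ has three or more nearby eigenvalues), so (4.2) does not "transfer verbatim" via the projector identity. The paper sidesteps this entirely by working directly with $\xi,\xi'$ and never passing to exact eigenvectors.
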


\begin{proof}

We take $\sqrt N<\nu<N-\sqrt N$ such that $|\xi_\nu|\gtrsim \frac 1{\sqrt N}$.
Since $\xi \bot\xi', \hfill\break
\Vert\xi_\nu\xi'-\xi_\nu'\xi\Vert_2 \gtrsim \frac 1{\sqrt N}$ and there is some $\nu_1$ so that
$$
|\xi_\nu \xi_{\nu_1}'-\xi_\nu' \xi_{\nu_1}|\frac 1N.\eqno{(4.4)}
$$
Again by (4.2). $\sqrt N<\nu_1< N-\sqrt N$.
Set further for $1\leq j\leq N$
$$
(\xi_j, \xi_j')=\big(\xi_j^2 +(\xi_j')^2\big)^{\frac 12} e^{i\theta_j}.\eqno{(4.5)}
$$
Hence (4.4) certainly implies that
$$
|\sin(\theta_\nu -\theta_{\nu_1})|>\frac 1N.
\eqno{(4.6)}
$$
Assume $\nu<\nu_1$ (the other alternative is similar).
We distinguish two cases.
\medskip

\noindent
{\bf Case 1.} {\sl There is some $\nu<j_1<\nu_1$ such that
$$
|\sin(\theta_\nu -\theta_{j_1})|>\frac 1{10N} \text { and } |\sin(\theta_{\nu_1} -\theta_{j_1})|>\frac
1{10N}.\eqno{(4.7)}
$$}

Define the vector
$$
\eta=\frac {\xi_{j_1}\xi'-\xi_{j_1}'\xi}{(\xi^2_{j_1}+(\xi_{j_1}')^2)^{\frac 12}}.
$$
Obviously $\Vert\eta\Vert_2=1$ and $\eta_{j_1}=0$.
Also, from (4.1) it easily follows that
$$
\Vert(H_N-E_0)\eta\Vert_2 < 2\delta.\eqno{(4.8)}
$$
From (4.7)
$$
|\eta_\nu|= \big(\xi^2_\nu+(\xi_\nu')^2\big)^{\frac 12} |\sin (\theta_{j_1}-\theta_\nu)|\gtrsim \frac 1{N^{3/2}}\eqno{(4.9)}
$$
$$
|\eta_{\nu_1}|\gtrsim \frac 1{N^2}.\eqno{(4.10)}
$$
Next, we introduce the vectors
$$
\eta^{(1)}=\eta|_{[1, j_1[} \text { and } \eta^{(2)} =\eta|_{[j_1+1, N]}
$$
as well as the restrictions
$$
H^{(1)} = H_{[1, j_1[} \text { and } H^{(2)} = H_{[j_1+1, N]}\eqno{(4.11)}
$$
with Dirichlet boundary conditions.
By (4.9), (4.10), $\Vert\eta^{(1)}\Vert_2,  \Vert \eta^{(2)}\Vert_2\gtrsim \frac 1{N^2}$ while by (4.8) and $\eta_{j_1} =0$,
it follows that $\Vert (H^{(1)}-E_0)\eta^{(1)} \Vert_2<2\delta, \hfill\break
\Vert (H^{(2)}-E_0)\eta^{(2)}\Vert_2< 2\delta$.

Hence
$$
\dist (E, \Spec H^{(1)})<N^2\delta, \dist (E, \Spec H^{(2)})<N^2\delta.\eqno{(4.12)}
$$
Note that $H^{(1)}$, $H^{(2)}$ are independent as functions of $V=(v_n)_{1\leq n\leq N}$ and by construction,
$\sqrt N\leq j_1\leq N-\sqrt N$.
Involving Proposition 3, it follows that the probability for the joint event (4.12) is at most
$$
c[j_1N^2\delta+ e^{cj_1}][(N-j_1)N^2\delta+ e^{-c(N-j_1)}]< CN^6\delta^2+ e^{-c\sqrt N}.\eqno{(4.13)}
$$
\medskip

\noindent
{\bf Case 2.} {\sl For all $\nu\leq j\leq \nu_1$, either $|\sin(\theta_\nu -\theta_j)|\leq \frac 1{10N} $ or
$|\sin(\theta_{\nu_1}-\theta_j)|\leq \frac 1{10N}$.}

Take then the smallest $\nu< j_1\leq \nu_1$ for which $|\sin(\theta_{\nu_1}-\theta_{j_1})|\leq \frac 1{10N}$.
Hence $|\sin(\theta_\nu -\theta_{j_1-1})|\leq \frac 1{10N}$. Denote
$$
\begin{aligned}
\eta^{(1)}&= \frac {\xi_{j_1}\xi'-\xi_{j_1}'\xi}{(\xi^2_{j_1} +(\xi_{j_1}')^2)^{\frac 12}} \Big|_{1\leq j\leq j_1-1}\\
\eta^{(2)}&= \frac {\xi_{j_1-1} \xi' -\xi'_{j_1-1}\xi}{(\xi^2_{j_1-1}+(\xi_{j_1-1}')^2)^{\frac 12}}\Big|_{j_1\leq j\leq N}
\end{aligned}
$$
and $H^{(1)} =H_{[1, j_1[}, H^{(2)} = H_{[j_1, N]}$.
Since $\eta^{(1)}_{j_1} =0,  \Vert(H^{(1)}-E_0)\eta^{(1)}\Vert_2< 2\delta$.
Also $\Vert\eta^{(1)}\Vert_2\geq |\eta_\nu|\gtrsim \frac 1{\sqrt N}|\sin(\theta_\nu-\theta_{j_1})|\gtrsim
\frac 1{\sqrt N}\big(\frac 1N-\frac 1{10N}\big)>\frac 1{N^2}$, implying $\dist (E, \Spec H^{(1)})<N^2\delta$.
Similarly $\dist (E, \Spec H^{(2)})< N^2\delta$ and we conclude as in Case 1.

Summing (4.13) over $j$, Lemma \ref{Lemma3} follows.
\end{proof}

We may now establish an analogue of Proposition 4 in \cite{B2} for Anderson-Bernoulli Hamiltonians as considered above.

\begin{proposition}
\label{Proposition5}
Let $I=[E_0-\delta, E_0+\delta]\subset [-2+\delta_0], [2+\delta_0] \text { and }  \log \frac 1\delta <c\sqrt N$.

Then
$$
\mathbb E[H_N \text { has at least two eigenvalues in $I$}]\leq CN^2\delta^2+C\delta \log \Big(N+\frac
1\delta\Big).\eqno{(4.14)}
$$
\end{proposition}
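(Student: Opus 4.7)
My plan is to reduce Proposition~\ref{Proposition5} directly to Lemma~\ref{Lemma3}, by showing that, outside a set of probability $O(\delta\log(N+1/\delta))$, any pair of orthonormal $H_N$-eigenvectors with eigenvalues in $I$ must obey the interior-localization hypothesis (4.2). Accordingly, if $H_N$ has eigenvalues $E_1,E_2\in I$ with orthonormal eigenvectors $\xi,\xi'$, then $\|(H_N-E_0)\xi\|_2,\|(H_N-E_0)\xi'\|_2<\delta$. I would split the relevant event into the sub-event $\mathcal A$ on which both $\xi$ and $\xi'$ satisfy (4.2), and its complement $\mathcal B$, and bound the two contributions separately.

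On $\mathcal A$, Lemma~\ref{Lemma3} applies directly and produces the polynomial-in-$N$ times $\delta^2$ contribution to (4.14); the exponentially small residual $e^{-c\sqrt N}$ is absorbed into $C\delta$ under the hypothesis $\log(1/\delta)<c\sqrt N$.

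On $\mathcal B$, at least one eigenvector, say $\xi$, fails (4.2): there exists $j<\sqrt N$ (the case $j>N-\sqrt N$ is symmetric) with $|\xi_j|\geq N^{-10}$. Positivity of the Lyapounov exponent on $[-2+\delta_0,2-\delta_0]$, together with the large deviation estimate (2.6) and Lemma~\ref{Lemma2}, would then force, with overwhelming probability in $V$, the center of localization of $\xi$ to sit within $K:=C\log(N/\delta)$ of the boundary, and $\xi$ to be $\delta$-close in $\ell^2$ to a genuine Dirichlet eigenvector of the boundary block $H^{(1)}:=H_{[1,K]}$ at some energy in $[E_0-C\delta,E_0+C\delta]$. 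The second eigenvector $\xi'$, being orthogonal to $\xi$, then has its $\ell^2$-mass predominantly in the complementary region, and in particular gives rise to a Dirichlet near-eigenvalue of $H^{(2)}:=H_{[K+1,N]}$ in $[E_0-C\delta,E_0+C\delta]$. Since $H^{(1)}$ and $H^{(2)}$ depend on disjoint collections of $v_n$, two applications of Proposition~\ref{Proposition3} multiply to yield
$$
\mathbb P(\mathcal B)\lesssim (K\delta+e^{-cK})(N\delta+e^{-cN})\lesssim \delta\log(N+1/\delta),
$$
using $K=C\log(N/\delta)$ and $\log(1/\delta)<c\sqrt N$.

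The hard part is the reduction on $\mathcal B$: one has to verify quantitatively that a non-negligible boundary tail $|\xi_j|\geq N^{-10}$ of an $H_N$-eigenvector at energy near $E_0$ really does force, up to an exponentially small perturbation in the energy, an honest Dirichlet eigenvalue of the short block $H^{(1)}$, so that the independence of $H^{(1)}$ and $H^{(2)}$ can be exploited to multiply two independent Wegner bounds. This step is essentially a quantitative Anderson-localization argument built on the transfer-matrix estimates of Section~2. Once that reduction is in place, $\mathcal A$ is handled by Lemma~\ref{Lemma3} and $\mathcal B$ by the product of two Wegner bounds via Proposition~\ref{Proposition3}, and summing the two contributions gives (4.14).
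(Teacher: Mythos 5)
Your decomposition into $\mathcal A$ (both eigenvectors satisfy the interior condition (4.2) at scale $N$) and $\mathcal B$ (boundary mass) has a fatal quantitative flaw on $\mathcal A$: applying Lemma~\ref{Lemma3} directly to $H_N$ yields a probability bound $CN^{7}\delta^{2}+e^{-c\sqrt N}$, and $N^{7}\delta^{2}$ is \emph{not} dominated by the target $CN^{2}\delta^{2}+C\delta\log(N+1/\delta)$ in the relevant regime (take $\delta\sim N^{-3}$, say: $N^{7}\delta^{2}\sim N$, whereas $N^{2}\delta^{2}\sim N^{-4}$ and $\delta\log(\cdot)\sim N^{-3}\log N$). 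The $N^{7}$ loss in Lemma~\ref{Lemma3} is only harmless when the lemma is applied to a system of \emph{polylogarithmic} size, and this is precisely what the paper does: it first uses Anderson localization to show that, outside a set of probability $O(e^{-cM})$ with $M=C\log^{2}(N+1/\delta)$, each eigenvector with eigenvalue in $I$ is $e^{-cM}$-concentrated on a length-$M$ window around its localization center; then, when two localization centers are within $O(M)$ of one another and away from $\{1,N\}$, it applies Lemma~\ref{Lemma3} to the restricted operator $H_\Lambda$ on a box $\Lambda$ of size $\lesssim M$, giving only $CM^{7}\delta^{2}$ per box and $CNM^{6}\delta^{2}$ after summing over $\lesssim N/M$ boxes, which is absorbed. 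The case of far-apart centers is handled by two independent applications of Proposition~\ref{Proposition3} (giving $N^{2}\delta^{2}$), and centers near the boundary by a single application (giving $M\delta$). Your proposal has no analogue of this localization-to-a-small-box step, so it cannot recover the exponent $2$ in $N^{2}\delta^{2}$.

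There are two further gaps in your $\mathcal B$ treatment. First, failure of (4.2) only places boundary mass somewhere in a window of width $\sqrt N$, not width $C\log(N/\delta)$; your boundary block $H^{(1)}=H_{[1,K]}$ with $K=C\log(N/\delta)$ is therefore too small, and the resulting single Wegner factor should be $\sqrt N\,\delta$ rather than $K\delta$ (this is recoverable since $(\sqrt N\delta)(N\delta)\lesssim N^{2}\delta^{2}$, but it changes the form of the estimate). Second, and more seriously, the assertion that orthogonality of $\xi'$ to $\xi$ forces $\xi'$ to carry its mass in the complementary region $[K+1,N]$ is false: both eigenvectors may be localized near the \emph{same} edge, in which case $H^{(1)}$ and $H^{(2)}$ do not see independent data and you cannot multiply the two Wegner bounds. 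The paper handles this subcase explicitly (eigenvalue pairs with both centers within $2M$ of $\{1,N\}$) and accepts a single Wegner bound $CM\delta\lesssim \delta\log(N+1/\delta)$ there; your argument as written would claim an incorrect $\delta^{2}$ bound.
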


\medskip
\begin{proof}

Proceeding as in \cite{B2}, set $M=C\log^2\big(N+\frac 1\delta\big)$ for an appropriate constant $C$.
From the theory of Anderson localization, the eigenvectors $\xi_\alpha$ of $H_N$ $|\xi_\alpha|=1$, satisfy
$$
|\xi_\alpha(j)| < e^{-c|j-j_\alpha|} \text { for } |j-j_\alpha|>\frac M{10}\eqno{(4.15)}
$$
with probability at least $1-e^{-cM}$, with $j_\alpha$ the center of localization of $\xi_\alpha$.
We may therefore introduce a collection $(\Lambda_s)_{1\leq s\leq s_1}, s_1\lesssim \frac NM$, of size $M$ subinterval of
$[1, N]$ such that for each $\alpha$, there is some $1\leq s\leq s_1$ satisfying
$$
j_\alpha \in\Lambda_s \text { and } \Vert\xi_\alpha|_{[1, N]\backslash\Lambda_s} \Vert_2 < e^{-cM}\eqno{(4.16)}
$$
$$
\Vert (H_{\Lambda_s}-E_\alpha)\xi_{\alpha, s}\Vert_2 < e^{-cM}.\eqno{(4.17)}
$$
where $\xi_{\alpha, s} =\xi_\alpha\big|_{\Lambda_s}$.
For $1<s<s_1$, we may moreover insure that
$$
|\xi_\alpha(j)|< e^{-cM} \text { if } \dist (j, \partial\Lambda_s)<\frac M{10}.\eqno{(4.18)}
$$
By (4.16), (4.17), $\dist(E_\alpha, \Spec H_{\Lambda_s})<e^{-cM}$ and hence $\Spec H_{\Lambda_s}\cap \tilde I\not=\phi$,
$\tilde I=[E_0-2\delta, E_0+2\delta]$, if $E_\alpha\in I$.
According to Proposition 3, by our choice of $M$
$$
\mathbb E[\Spec H_{\Lambda_s}\cap \tilde I\not= \phi]< CM\delta+ce^{-cM}<CM\delta. \eqno{(4.19)}
$$
Note that if $\Lambda_s\cap \Lambda_{s'}=\phi$, then $H_{\Lambda_s}, H_{\Lambda_{s'}}$ are independent.

Hence, by construction and (4.19),
$$
\begin{aligned}
&\mathbb E\, [\text{there are $\alpha, \alpha'$ s.t. $E_\alpha, E_{\alpha'}\in I$ and $ |j_\alpha -j_{\alpha'}|> 4M$}]\leq\\
&C\sum_{s, s'} (M\delta)^2 <CN^2\delta^2.
\end{aligned}\eqno{(4.20)}
$$
with the $s, s'$-sum performed over pairs such that $\Lambda_s \cap \Lambda_{s'}=\phi$.

It remains to consider the case $|j_\alpha -\j_{\alpha'}|\leq 4M$.
If $\dist(j_\alpha, \{1, N\})< 2M$ then $\Spec H_{[1, 4M]}\cap \tilde I \not= \phi$ or $\Spec H_{[N-4M, N]}\cap \tilde I
\not=\phi$.

Again by Proposition 3, the probability for this event is less than
$$
CM\delta <C\log \Big(N+\frac 1\delta\Big)\delta.\eqno{(4.21)}
$$
Next assume moreover $\dist(\{j_\alpha, j_{\alpha'}\}, \{1, N\})\geq 2M$. Then
$$
j_\alpha \in\Lambda_t, j_{\alpha'}\in \Lambda_{t'} \text{ where  $1<t, t'<s_1$ and $|t-t'|<10$}.
$$

Introduce an interval $\Lambda$ obtained as union of at most 10 consecutive $\Lambda_s$ intervals, such that
$\Lambda_t, \Lambda_{t'}\subset\Lambda$.
By (4.16), (4.18), setting $\tilde \xi_\alpha =\xi_\alpha|_\Lambda, \tilde\xi_{\alpha'} =\xi_{\alpha'}|_\Lambda$, we get
$$
\Vert (H_\Lambda -E_\alpha)\tilde\xi_\alpha\Vert_2 < e^{-cM}, \Vert (H_\Lambda -E_{\alpha'}) \tilde\xi_{\alpha'}\Vert_2
< e^{-cM}
$$
so that for $E_\alpha, E_{\alpha'}\in I$
$$
\Vert(H_\Lambda -E_0)\tilde\xi_\alpha\Vert_2<2\delta, \Vert(H_\Lambda -E_0)\tilde\xi_{\alpha'}\Vert_2< 2\delta.
\eqno{(4.22)}
$$
Also, by (4.18), $\max_{\dist (j, \partial\Lambda)<\frac M{10}} (|\xi_\alpha (j)|, |\xi_{\alpha'}(j)|)<
e^{-cM}<\frac 1{|\Lambda|^{10}}$.
Hence, Lemma 3 applies to $H_\Lambda$.
According to (4,3), the probability that $H_\Lambda$ satisfies the above property is at most (again by our choice of $M$)
$$
CM^7 \delta^2+e^{-c\sqrt M}<CM^7\delta^2.\eqno{(4.23)}
$$
Summing over the different boxes $\Lambda$ introduced above gives then
$$
CN.M^7\delta^2< CN\Big(\log \Big(N+\frac 1\delta\Big)\Big)^7\delta^2.\eqno{4.24)}
$$
Adding the contributions (4.20), (4.21), (4.24) and noting that the last is majorized by the first two, inequality
(4.14) follows.
\end{proof}

\section
{Local eigenvalue statistics}

Following the same argument as in Proposition 5 of \cite {B2}, Proposition 4 and Proposition 5 above permit to establish
Poisson statistics for the local eigenvalue spacings.  Thus we obtain Proposition 2 stated in the Introduction.

The proof is completely analogous to that of Proposition 5 in \cite{B2}, except that instead of choosing $M=K\log N$,
$M_1=K_1\log N$, we take say $M=(\log N)^4, M_1 =(\log N)^3$.

\medskip

\noindent
{\bf Acknowledgment.} The author is grateful to the UC Berkeley mathematics department (where the paper was written) for its
hospitality.

\end{document}